\documentclass[11pt]{amsart}


\voffset=5mm
\oddsidemargin=5pt \evensidemargin=5pt
\headheight=9pt     \topmargin=-24pt
\textheight=649pt   \textwidth=463.pt

\usepackage{amssymb,verbatim}
\usepackage{amsmath,amsfonts}
\usepackage[mathscr]{euscript} 
\usepackage{amsthm}
\usepackage{url}
\usepackage{graphicx} 
\usepackage{enumitem} 
\usepackage{hyperref} 
\hypersetup{colorlinks} 
\usepackage{bbm} 
\usepackage{mathrsfs} 
\usepackage{cancel} 





\setlist[enumerate]{
	label=\textnormal{({\roman*})},
	ref={\roman*}}

\makeatletter
\def\th@plain{%
	\thm@notefont{}
	\itshape 
}
\def\th@definition{%
	\thm@notefont{}
	\normalfont 
}
\makeatother


\newtheorem{thm}{Theorem}
\newtheorem{lemma}[thm]{Lemma}

\newtheorem{prop}[thm]{Proposition}

\newtheorem{ex}[thm]{Example}



\numberwithin{equation}{section}


\def\sq{\square}

\def\nn{\mathbb N}

\def\ee{\mathbb E}

\def\la{\lambda}
\def\ga{\gamma}

\def\de{\delta}
\def\ep{\ve}
\def\al{\alpha}
\def\be{\beta}

\def\ve{\varepsilon}

\def\cL{\mathcal L}

\def\bP{\mathbf{P}}
\def\ssu{\subset}

\def\<{\langle}
\def\>{\rangle}

\def\Cat{ {\text {\rm Cat} } }
\def\CCat{ {\text {\it Cat} } }

\def\0{{\mathbf 0}}

\def\.{\hskip.06cm}
\def\ts{\hskip.03cm}

\newcommand{\SYT}{\operatorname{{\rm SYT}}}

\def\.{\hskip.06cm}
\def\ts{\hskip.03cm}

\def\nin{\noindent}

\def\Pb{{\text{\bf P}}}


\title[Sorting probability of Catalan posets]{Sorting probability of Catalan posets}
\date{\today}

\author{Swee Hong Chan}
\address[Swee Hong Chan]{Department of Mathematics, UCLA,  Los Angeles, CA 90095.}
\email{\texttt{sweehong@math.ucla.edu}}

\author[\ts Igor Pak]{Igor Pak}
\address[Igor Pak]{Department of Mathematics, UCLA,  Los Angeles, CA 90095.}
\email{\texttt{pak@math.ucla.edu}}

\author[\ts Greta Panova]{Greta Panova}
\address[Greta Panova]{Department of Mathematics, USC,  Los Angeles, CA 90089.}
\email{\texttt{gpanova@usc.edu}}




\begin{document}

\begin{abstract}
We show that the \emph{sorting probability} \ts of the
\emph{Catalan poset}~$P_n$ satisfies \ts $\de(P_n)= O\bigl(n^{-5/4}\bigr)$.
\end{abstract}
	
		\keywords{1/3--2/3 conjecture,  Catalan numbers, Catalan posets, linear extension,  sorting probability}
	\subjclass[2010]{05A16, 06A07, 60C05}
	
	\maketitle

\section{Introduction}

The \emph{sorting probability} of a poset~$P$, see below,
is an interesting measure of independence of linear extensions of~$P$.
Originally introduced in connection with sorting under partial information
by Kislitsyn (1968) and Fredman (1975), it came to prominence as the subject
of the celebrated \ts \emph{$\frac13\.$--$\,\.\frac23$ \ts Conjecture}, see~\cite{Tro}.
The conjecture received further acclaim in 1980s after a remarkable breakthrough
by Kahn and Saks~\cite{KS}, but remains open in full generality.  We refer
to~\cite[$\S$1.3]{CPP} for a recent overview of the literature and further references.

In this paper we study the sorting probability \ts $\de(P_n)$ \ts of a \emph{Catalan
poset}~$P_n$ on~$2n$ elements, which  is defined as a product  of a chain
with~2 elements and with~$n$ elements: \ts $P_n := C_2 \times C_n$.
The name comes from the fact that the number
of linear extensions of~$P_n$ is the \emph{Catalan number}:
$$
e(P_n) \, = \, \Cat(n) \, := \, \frac{1}{n+1}\binom{2n}{n}.
$$
With over numerous combinatorial interpretations and countless literature,
Catalan numbers are extremely well studied, see e.g.~\cite{Sta,S2}.
It is thus remarkable that \ts $\de(P_n)$ \ts has been out of reach
until now.

\smallskip

Formally, for a finite poset $P=(X,\prec)$, let $\cL_P$ denote the set
of linear extensions of~$P$, and let \ts $e(P):=|\cL_P|$.
The \emph{sorting probability} \ts
$\de(P)$ \ts is defined as
$$
\de(P) \. := \. \min_{x,y\in X} \, \bigl| \ts \Pb\ts[L(x)\leq L(y) ]
\ts - \ts \Pb\ts[L(y)\leq L(x) \ts] \ts\bigr|\.,
$$
where $L \in \cL_P$ is a uniform linear extension of~$P$.
The \ts \emph{$\frac13\.$--$\,\.\frac23$ \ts Conjecture}
mentioned above, claims that \ts $\de(P)\le \frac13$ \ts for all finite posets~$P$.

\smallskip

\begin{thm} \label{t:main}
For the Catalan poset $P_n$, we have \ts $\de(P_n)= O\bigl(n^{-5/4}\bigr)$.
\end{thm}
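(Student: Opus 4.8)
The plan is to compute the relevant probabilities exactly using the lattice-path model for $\cL_{P_n}$ and then extract sharp asymptotics. A linear extension of $P_n = C_2 \times C_n$ can be encoded by a standard Young tableau of shape $2\times n$ (equivalently, a sequence of $2n$ moves, $n$ in each row, that never lets the bottom row get ahead of the top row), so $\cL_{P_n}$ is in bijection with monotone lattice paths, i.e.\ Dyck-type paths counted by $\Cat(n)$. Under this encoding, for an element $(i,j)$ of $P_n$ (with $i\in\{1,2\}$, $j\in\{1,\dots,n\}$), the value $L(i,j)$ is determined by how far along the path the $j$-th step in row $i$ is taken. Thus $\Pb[L(x)\le L(y)]$ for two comparable-in-label elements becomes a ratio of ballot-type path counts, each of which is a product/sum of binomial coefficients with a Catalan-number normalization. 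The first step is therefore to write $\Pb[L(x)\le L(y)] - \Pb[L(y)\le L(x)]$ in closed form for a well-chosen family of pairs $x,y$.

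Second, I would make the right choice of pair. The natural candidates are pairs of incomparable elements near the ``anti-diagonal'' of $P_n$, e.g.\ $x=(1,j+1)$ and $y=(2,j)$ for $j\approx n/2$, or more generally $x$ in the top row and $y$ in the bottom row at positions chosen so that the two marginal distributions of $L(x)$ and $L(y)$ overlap as much as possible. For such a pair the quantity $\Pb[L(x)<L(y)]-\Pb[L(y)<L(x)]$ is a single alternating sum of products of binomials; I expect it to simplify (via the reflection principle or a short generating-function computation) to something like a ratio of central binomial coefficients, e.g.\ of order $\binom{2k}{k}\binom{2(n-k)}{n-k}/\Cat(n)$ with an additional factor measuring the ``imbalance'' of the position. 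Optimizing the position $k$ (and the fine offset between $x$ and $y$) is what produces the exponent: balancing a $n^{-1/2}$-type spread against a discrete correction of size $n^{-1/2}\cdot n^{-1/4}$-ish is exactly the kind of trade-off that yields $n^{-5/4}$ rather than the generic $n^{-1/2}$.

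Third, once the closed form is in hand, the asymptotic analysis is routine: apply Stirling's formula to the binomials, approximate the relevant sums by Gaussian integrals, and track the error terms. The point is that the leading $n^{-1/2}$ contributions from the two probabilities cancel by the symmetry of the chosen pair, leaving a lower-order main term. Choosing the offset between $x$ and $y$ optimally kills one more power, and the residual discreteness (a $\lfloor\cdot\rfloor$ vs.\ continuous optimum) contributes the final $n^{-1/4}$, giving $\de(P_n)=O(n^{-5/4})$.

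The main obstacle I anticipate is the second step: identifying the pair $x,y$ that achieves the $n^{-5/4}$ bound and proving the resulting alternating binomial sum really does collapse to a manageable closed form. A priori the difference of probabilities is a double sum (over the two positions along the path), and one needs either a clean bijective cancellation or a hypergeometric/creative-telescoping identity to reduce it; getting that reduction exactly right — including the parity/floor subtleties in the optimal position — is where the real work lies. The Stirling-based asymptotics afterward, while tedious, should present no conceptual difficulty.
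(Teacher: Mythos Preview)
Your outline has the right ambient setup (lattice paths, ballot counts, pairs $x=(1,a)$, $y=(2,b)$ with $a>b$), but it is missing the mechanism that actually produces the exponent $-5/4$, and the mechanism you sketch in its place does not work.

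The paper does not try to obtain a closed form for $\Pb[L(x)<L(y)]-\tfrac12$ and then optimize it via Stirling. Instead it studies the two-variable function $R_n(h,z):=\Pb[L(2,h-z)<L(1,h)]$ and bounds its discrete partial derivatives. The $z$-increment is $\Theta(n^{-1/2})$ throughout the relevant range, which only yields $\de(P_n)=O(n^{-1/2})$; this is the bound already in~\cite{CPP}. The crucial and non-obvious point is that the $h$-increment is \emph{not} of the same order: one computes $R_n(h,z)-R_n(h+1,z)=\Theta\bigl((n-2h+z)/n^2\bigr)$, so the $h$-derivative vanishes to first order at the symmetry point $h\approx (n+z)/2$ and grows linearly away from it. The proof then proceeds in two stages: first choose $z$ so that $R_n(n/2,z)$ lies within $O(n^{-1/2})$ of $\tfrac12$; then vary $h$ away from $n/2$. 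Because the $h$-derivative is quadratically small near the center, one must move $h$ by $\Theta(n^{3/4})$ before the accumulated change reaches $n^{-1/2}$ and $R_n$ crosses $\tfrac12$. At the crossing point the $h$-increment is therefore $\Theta(n^{3/4}/n^2)=\Theta(n^{-5/4})$, and that increment bounds $\de(P_n)$.

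Your proposal does not contain this. The phrases ``leading $n^{-1/2}$ contributions cancel by symmetry'' and ``residual discreteness contributes the final $n^{-1/4}$'' describe a different (and, as stated, arithmetically inconsistent) heuristic; the exponent $-5/4$ here does not arise from a rounding error on top of a continuous optimum, but from the interplay between the $O(n^{-1/2})$ deficit in the $z$-direction and the \emph{linearly vanishing} $h$-derivative. You also anticipate the wrong obstacle: no hypergeometric collapse of a double sum is needed, because one never evaluates $R_n(h,z)$ in closed form --- only its one-step increments, each of which is a single product of two ballot numbers and is already in closed form by the reflection principle. What you should be looking for is the exact formula for $R_n(h,z)-R_n(h+1,z)$ (a difference of two explicit path counts), from which the factor $(n-2h+z)$ drops out directly.
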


\smallskip

Until recently, there were very few results in this direction. First,
it was shown by Linial, that $\de(P_n)\le \frac13$\ts, and in fact
this holds for all posets of width two~\cite{Lin}.  For indecomposable posets~$P$ of width two,
this general bound was slightly improved by Sah to \ts $\de(P)< 0.3225$~\cite{Sah}. In an
online discussion about for Catalan posets, the second author improved this bound
to \ts $\de(P_n)< 0.2995$, by comparing $x=(1,17)$, $y=(2,3)$ and taking~$n$
large enough~\cite{Pak}.
In a different direction, Olson and Sagan showed in~\cite{OS}, that \ts
$\de(P_\la)\le \frac13$, for all Young diagrams $\la\vdash n$, s.t.\
\ts $\la\ne (n)$, $(1^n)$.

In our recent paper~\cite{CPP}, we showed
that \ts $\de(P_n)= O\bigl(n^{-1/2}\bigr)$, giving the first bound
that \ts $\de(P_n)\to 0$ \ts as $n\to \infty$. Thus Theorem~\ref{t:main}
is a substantial improvement over this result.  More generally, we showed
that \ts $\de(P_\la) = O\bigl(n^{-1/2}\bigr)$, for all partitions \ts
$\la\vdash n$ \ts with bounded length $\ell =\ell(\la)$,
and such that \ts $\la_\ell= \Omega(n)$.  The tools in~\cite{CPP} rely on
technical results in Algebraic Combinatorics. Here we present a more direct
computation giving better bounds.

\begin{figure}[hbt]
\includegraphics[width=16.3cm]{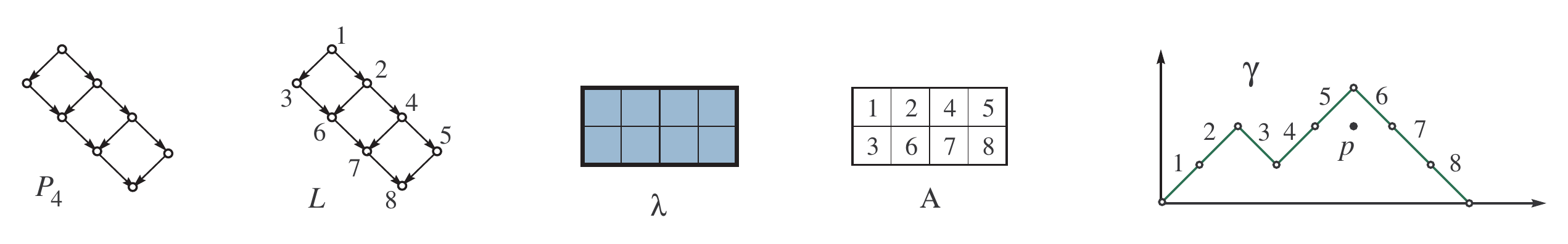}
\vskip-.2cm
\caption{{\small Catalan poset $P_4$, linear extension $L \in \cL_{P_4}$,
Young diagram $\la=(4,4)$, the corresponding standard Young tableau $A\in \SYT(\la)$,
and Dyck path $\ga:(0,0)\to (8,0)$.  }}
\label{f:Cat-path}
\end{figure}

We have several motivations for studying the Catalan posets.  On the one hand,
they are a natural special case of general Young diagram posets $P_\la$,
and Theorem~\ref{t:main} is perhaps an indication how sharp the general
bounds in~\cite{CPP} are. On the other hand, they are closely related to the
behavior of  Brownian excursion, via the standard bijection from standard
Young tableaux $A$ of shape $(n,n)$ to Dyck paths $\ga: (0,0)\to (2n,0)$, see
Figure~\ref{f:Cat-path}.

Curiously, the sorting probability has a natural probabilistic interpretation
in terms of Dyck paths:
$$
\Pb\ts\bigl[\ts L(1,a) < L(2,b)\ts \bigr] \, = \,
\Pb\ts\bigl[\ts \text{$\ga$ passes above $(a+b-1,a-b)$}\ts\bigr]\ts.
$$
For example, for $A\in \SYT(4,4)$ as in the figure, let $a=5$ and $b=2$. Then
we have $A(1,5)<A(2,2)$, and the corresponding path $\ga$ is above point $p=(5,2)$.
Unfortunately the standard probabilistic tools for the Brownian excursion
are too weak to establish Theorem~\ref{t:main}, but they do give the right
heuristic idea of how to approach the problem (see~$\S$\ref{ss:finrem-Brown}).
Thus we resort to a direct asymptotic analysis of the sorting probabilities.

\smallskip

\subsection*{Notation.}
We write \ts $C_1(\ep), C_2(\ep),\ldots$ \ts to denote  (effectively computable)
positive constants that depend on a fixed parameter $\ep>0$, but not on~$n$.
Similarly, we write \ts $C_1, C_2,\ldots$  \ts to denote (effectively computable)
absolute constants that do not depend on~$\ep$.  In the paper, we identify
$P_n$ with Young diagram~$(n,n)$, and linear extensions $\cL_{P_n}$ with
standard Young tableaux $\SYT(n,n)$, see Figure~\ref{f:Cat-path}.
Here we use the matrix coordinates, so e.g.\ $L(2,3)=7$, for $L$
as in the figure.

\bigskip

\section{Sorting probability via lattice paths}	

Throughout the paper, let \. $I=\bigl[\frac{n}{10}, \ts\frac{9n}{10}\bigr]$ \. and \.
$J=\bigl[\frac{\sqrt{n}}{10}, \ts 10\ts\sqrt{n}\bigr]$.
Our approach to proving Theorem~\ref{t:main} is to carefully analyze the
\emph{sorting probability function} \ts $R_n(h,z): I\times J \to [0,1]$,
defined as follows:
\begin{equation}\label{eq:R definition}
 R_n(h,z) \, := \,  \Pb\ts \bigl[ \. L(2,h-z )  \ts < \ts L(1,h) \ts \bigr],
 \ \  \text{where} \ \  h \in I, \ z \in J\ts.
\end{equation}

\smallskip

Consider the \emph{lattice paths}~$\ga$ in $\nn^2$ from \ts $(0,0)$ to $(n,n)$, which
move up and to the right and do not go below (Southeast) of the main diagonal $(0,0)--(n,n)$. Denote by \ts $\CCat(n)$ \ts the set of such paths.
Our intuition comes from the following combinatorial interpretation already
mentioned in the introduction.

\begin{prop}\label{p:Dyck_path}
The sorting probability function \ts $R_n(h,z)$ \ts is equal to the
probability of a lattice path \ts $\ga \in \CCat(n)$ \ts to pass Southeast~\text{\rm (SE)}
of the point \ts $(h-z-\frac12,h-\frac12)$.
\end{prop}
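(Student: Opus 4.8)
The plan is to unwind both sides through the standard bijection between $\SYT(n,n)$ and $\CCat(n)$ and then match the two events. First I would fix the bijection $L \mapsto \gamma_L$: for $L \in \SYT(n,n)$, let the $k$-th step of $\gamma_L$ (for $1 \le k \le 2n$) be a North step if the value $k$ occupies the first row of $L$, and an East step if $k$ occupies the second row. Since $L$ is a standard Young tableau, among $1,\dots,m$ the first row always holds at least as many entries as the second, so after $m$ steps the $y$-coordinate of $\gamma_L$ is at least its $x$-coordinate; hence $\gamma_L$ never passes strictly SE of the main diagonal and lies in $\CCat(n)$. The inverse map reads off the North steps and the East steps of a path in increasing order to fill the two rows, the diagonal condition being exactly column-strictness. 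As $L \mapsto \gamma_L$ is a bijection and both sides carry the uniform measure, it suffices to show the two defining events agree.

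Next I would translate the event. Fix $h \in I$, $z \in J$ and set $i := h$, $j := h-z$. By construction $L(1,i)$ is the index of the $i$-th North step of $\gamma_L$ and $L(2,j)$ is the index of its $j$-th East step. Therefore $L(2,j) < L(1,i)$ holds precisely when $\gamma_L$ has already made at least $j$ East steps before making its $i$-th North step, i.e.\ when the lattice point of $\gamma_L$ sitting immediately below that $i$-th North step is $(x^\ast, i-1)$ with $x^\ast \ge j$.

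Finally, the geometric reformulation. Because $\gamma_L$ is monotone, it meets the horizontal line $y = i - \tfrac12$ in a single point, which lies directly above the lattice point $(x^\ast, i-1)$ from the previous step, so this crossing occurs at abscissa $x^\ast$. The inequality $x^\ast \ge j$ is then equivalent to saying the crossing lies to the right of $x = j - \tfrac12$, which is exactly the statement that $\gamma_L$ passes SE of the point $(j - \tfrac12,\, i - \tfrac12) = \bigl(h - z - \tfrac12,\, h - \tfrac12\bigr)$. Chaining the three steps yields $R_n(h,z) = \Pb\bigl[\gamma \text{ passes SE of } (h-z-\tfrac12,\, h-\tfrac12)\bigr]$ for a uniform $\gamma \in \CCat(n)$.

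Conceptually there is no serious obstacle; the one thing demanding care is the bookkeeping of conventions — choosing the orientation of the bijection (which row corresponds to North and which to East) so that the "never SE of the diagonal" constraint defining $\CCat(n)$ is reproduced, and checking that the two half-integer shifts line up the event "before the $i$-th North step" with "crossing $y = i - \tfrac12$ to the right of $x = j - \tfrac12$" rather than with an adjacent grid line. Testing $n=1$ and $n=2$ pins down all the conventions unambiguously.
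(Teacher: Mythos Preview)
Your proposal is correct and follows exactly the same approach as the paper: both arguments invoke the standard bijection between $\SYT(n,n)$ and $\CCat(n)$ (first-row entries become up/North steps, second-row entries become right/East steps) and read off the event $L(2,h-z)<L(1,h)$ as a geometric condition on the path. The paper's proof simply declares the details ``straightforward,'' whereas you have written them out carefully; your bookkeeping of the half-integer offsets and the SE convention is accurate.
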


\begin{proof}
This follows from the bijection between lattice paths \ts $\ga \in \CCat(n)$ \ts and
linear extensions \ts $L \in \cL_{P_n}$ \ts via standard Young tableaux \ts $A\in \SYT(n,n)$,
as shown in Figure~\ref{f:Cat-path}.  Formally, let up-steps correspond to
a square in the first row, and  right-steps to  squares in the second row.
The details are straightforward.
\end{proof}

\smallskip


The next two lemmas describe the local behavior of the sorting probability function
\ts $R_n(h,z)$, in essence estimating discrete partial derivatives in both directions.
These lemmas are key to the proof of  Theorem~\ref{t:main}. We prove
the theorem in Section~\ref{s:theorem} and the lemmas in
Section~\ref{s:R-properties}.

\smallskip

To simplify the notation, we extend this function to all real numbers: \ts
$R_n(h,z) := R_n\bigl(\lfloor h\rfloor,\lfloor z\rfloor\bigr)$.

\smallskip

\begin{lemma}\label{l:R is z-increasing function}
For all \. $h \in I$ \. and \. $z \in J$,
the sorting probability function satisfies:
\begin{equation}\label{eq:R-14}
R_n \left(h,\sqrt{n}/10 \right) \, \leq  \, \frac{1}{4}\,
\quad \text{and} \quad R_n(h,10\sqrt{n}) \, \geq \, \frac{3}{4}\.,
\end{equation}
and
\begin{equation}\label{eq:R z derivative}
 \frac{C_1}{\sqrt{n}} \, \le \,  R_n(h,z+1) - R_n(h,z) \, \le  \, \frac{C_2}{\sqrt{n}}\.,
\end{equation}
where \ts $C_1, \ts C_2>0$ \ts are universal constants, and $n$ is large enough.
\end{lemma}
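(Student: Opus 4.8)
The key identity is Proposition~\ref{p:Dyck_path}: $R_n(h,z)$ equals the probability that a uniformly random path $\ga \in \CCat(n)$ passes SE of the lattice point $p_{h,z} := (h-z-\tfrac12, h-\tfrac12)$. Fix $h \in I$. As $z$ increases by $1$, the point $p_{h,z}$ moves one unit to the left (the second coordinate stays fixed, the first decreases by $1$), so the event ``$\ga$ passes SE of $p_{h,z+1}$'' is weaker than ``$\ga$ passes SE of $p_{h,z}$'' --- hence monotonicity in $z$ is immediate, and the increment $R_n(h,z+1)-R_n(h,z)$ equals the probability that $\ga$ passes SE of $(h-z-\tfrac32,h-\tfrac12)$ but NOT SE of $(h-z-\tfrac12,h-\tfrac12)$. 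Geometrically this says: the path $\ga$, at the moment its horizontal coordinate first reaches $h-z-1$, is at height exactly $h-1$ (it crosses the vertical line $x = h-z-1$ at height $h-1$, equivalently passes through the edge between $(h-z-1,h-1)$ and $(h-z,h-1)$). So the increment is precisely $\Pb[\ga \text{ uses the horizontal edge } (h-z-1,h-1)\to(h-z,h-1)]$.

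**Estimating the edge probability.** This probability has an exact product formula via the reflection principle / ballot-type counting. The number of paths in $\CCat(n)$ through a fixed horizontal edge at $(a,b)\to(a+1,b)$ with $b \ge a+1$ factors as $N_1 \cdot N_2 / \Cat(n)$, where $N_1$ counts sub-paths $(0,0)\to(a,b)$ staying weakly above the diagonal and $N_2$ counts sub-paths $(a+1,b)\to(n,n)$ staying weakly above the diagonal; both are given by the standard ballot numbers (determinantal/reflection formulas for paths in a wedge). With $a = h-z-1$, $b = h-1$, so $b - a = z$, and since $h \in I = [n/10, 9n/10]$ and $z \in J = [\sqrt n/10, 10\sqrt n]$, we are counting paths near the diagonal with a gap of order $\sqrt n$. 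The plan is to write these ballot numbers as ratios of binomial coefficients and apply Stirling's formula: $N_1 \sim \binom{a+b}{a}\cdot\tfrac{b-a+1}{b+1}$ and $N_2 \sim \binom{(n-a)+(n-b)}{n-a}\cdot\tfrac{(n-a)-(n-b)+1}{n-a+1}$ — each a central binomial times a linear factor of order $z/\sqrt n$. After dividing by $\Cat(n)\sim 4^n/(\sqrt\pi\, n^{3/2})$, the $4^n$ factors cancel; the polynomial-in-$n$ prefactors combine (three square-root factors from three Stirling expansions against the $n^{3/2}$) to give $\Theta(1/\sqrt n)$, and the two linear factors each contribute $\Theta(z/\sqrt n)=\Theta(1)$ on the range $z\in J$. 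The Gaussian exponential factors from Stirling are all $\Theta(1)$ because $h/n$ is bounded away from $0$ and $1$ and $z = O(\sqrt n)$. This yields $C_1/\sqrt n \le R_n(h,z+1)-R_n(h,z) \le C_2/\sqrt n$ with absolute constants, uniformly over $h \in I$, $z \in J$, for $n$ large.

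**The endpoint inequalities \eqref{eq:R-14}.** For these I would use the same edge-probability formula, or more directly a comparison with the Brownian excursion heuristic made rigorous by a direct count. For $z = \sqrt n/10$: the event is that $\ga$ passes SE of a point at ``height'' $h-\tfrac12$ and horizontal coordinate $h-z-\tfrac12$, i.e.\ the path has already dropped to within $z+O(1) = O(\sqrt n /10)$ of the diagonal by the time $x+y \approx 2h$. Summing the edge-probabilities $R_n(h,z'+1)-R_n(h,z')$ for $z'$ from $0$ up to $\sqrt n/10$ and using the upper bound $C_2/\sqrt n$ per term gives $R_n(h,\sqrt n/10) \le C_2/10 + R_n(h,0)$, and $R_n(h,0)$ is itself $o(1)$ (or small) by a crude reflection bound; choosing the universal constant appropriately — or rescaling the interval $J$'s left endpoint constant — makes this $\le 1/4$. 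Symmetrically, $R_n(h,10\sqrt n) \ge C_1 \cdot(10\sqrt n - \sqrt n/10)/\sqrt n \cdot(\text{something}) + \cdots$, i.e.\ summing the lower bounds $C_1/\sqrt n$ over $\Theta(\sqrt n)$ values of $z$ forces $R_n$ to exceed $3/4$ once enough increments have accumulated; alternatively, directly estimate $1 - R_n(h,10\sqrt n) = \Pb[\ga \text{ stays strictly within } 10\sqrt n \text{ of the diagonal near } x+y=2h]$, which is small by the same ballot estimates since a path forced that close to the diagonal over a $\Theta(n)$-length portion is exponentially (or at least polynomially) rare. Either way the constants $\tfrac14$ and $\tfrac34$ are comfortably attainable with the freedom we have in the constants defining $I$ and $J$.

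**Main obstacle.** The genuinely delicate part is making the Stirling estimates uniform over the full ranges $h \in I$ and $z \in J$ simultaneously, and tracking that the two ballot-number linear factors really do stay bounded away from $0$ and $\infty$ at both ends of $J$ — this is what pins down that both $C_1$ and $C_2$ exist as positive absolute constants rather than degenerating. One must be careful that when $z$ is as large as $10\sqrt n$ the ratio $(b-a+1)/(b+1) = (z+1)/h$ is still $\Theta(1/\sqrt n)$ and not smaller, which uses $h = \Theta(n)$; and that the Gaussian factor $\exp(-\Theta(z^2/n))$ coming from the central-binomial asymptotics is bounded below, which uses $z \le 10\sqrt n$. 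The bookkeeping of the three separate Stirling expansions (for $N_1$, $N_2$, and $\Cat(n)$) and verifying the powers of $n$ cancel to leave exactly $n^{-1/2}$ is routine but must be done with care.
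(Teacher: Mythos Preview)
Your overall strategy matches the paper's: write the increment $R_n(h,z+1)-R_n(h,z)$ as the probability that a random $\ga\in\CCat(n)$ uses a single fixed edge, then estimate that edge probability by the ballot formula and Stirling. The paper packages the Stirling computation as a citation to~\cite{MP14} (Lemma~\ref{l:visit probability Dyck path}), but the substance is the same.

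There is, however, a geometric slip in your identification of the edge. Unwinding your own description --- $\ga$ lies SE of $(h-z-\tfrac32,h-\tfrac12)$ but not SE of $(h-z-\tfrac12,h-\tfrac12)$ --- says that the $(h{-}z{-}1)$-st right step precedes the $h$-th up step, which in turn precedes the $(h{-}z)$-th right step. Hence the $h$-th up step occurs at $x=h-z-1$, i.e.\ $\ga$ uses the \emph{vertical} edge $(h-z-1,h-1)\to(h-z-1,h)$, not the horizontal edge you name. (Your horizontal edge encodes the different event $L(1,h-1)<L(2,h-z)<L(1,h)$.) Both edge probabilities are $\Theta\bigl((z+1)^2/n^{3/2}\bigr)=\Theta(n^{-1/2})$ on $I\times J$, so~\eqref{eq:R z derivative} survives, but the identification should be fixed.

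For the endpoint inequalities~\eqref{eq:R-14}, summing the uniform bound $C_2/\sqrt{n}$ over $z'\le\sqrt{n}/10$ does not work as written: that bound is only established for $z'\in J$, and in any case it gives $C_2/10$, which need not be $\le\tfrac14$ for the universal $C_2$ produced by your Stirling argument. The paper instead sums the sharper estimate $q_n(h-k,h)\le C\,(k+1)^2/n^{3/2}$ (valid for all $k\ge 1$, from~\eqref{eq:visit edge 2}) to get $\le C/1000$, and for the $\tfrac34$ bound uses the Gaussian tail $e^{-k^2/n}$ from~\eqref{eq:visit edge 1} for $k>10\sqrt{n}$; both constants are then checked numerically. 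So you need the edge probability pinned down to order $(k+1)^2/n^{3/2}$, not merely $O(n^{-1/2})$, and the specific thresholds $\tfrac14,\tfrac34$ will not fall out without a numerical verification.
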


\smallskip

In other words,  the function  \ts $R_n(h,\cdot)$ \ts is increasing and
passing over \ts $1/2$ \ts at some point in the interval~$J$.

\smallskip

\begin{lemma}\label{l:R is h-unimodal}
For all   \. $h \in \bigl[\frac{n}{10},  \frac{n+z+1}{2}\bigr]\ssu I$
\. and \. $z \in J$, the sorting probability function satisfies:
\begin{equation}\label{eq:R-sym}
R_n(h,z) \. = \. R_n(n+z-h,z)\., 
\end{equation}
and
\begin{equation}\label{eq:R h-derivative}
C_3\.\frac{n-2h+z}{n^2} \, \le  \, R_n(h,z) \. - \. R_n(h+1,z) \, \le  \,  C_4\.\frac{n-2h+z}{n^2}\.,
\end{equation}
where \ts $C_3, \ts C_4>0$ \ts are universal constants, and $n$ is large enough.
\end{lemma}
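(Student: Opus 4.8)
The plan is to work entirely with the lattice-path interpretation from Proposition~\ref{p:Dyck_path}, writing $R_n(h,z)$ as the probability that a uniform path $\ga \in \CCat(n)$ passes SE of the point $p = (h-z-\tfrac12, h-\tfrac12)$. Equivalently, $R_n(h,z) = N(h,z)/\Cat(n)$, where $N(h,z)$ counts paths in $\CCat(n)$ that pass SE of $p$, i.e.\ that touch or go below the anti-diagonal line through $p$. A path passes SE of $p$ exactly when it contains a lattice point $(a,b)$ with $a+b = 2h-z-1$ and $b \le h-1$ (equivalently $a \ge h-z$). By the reflection principle / ballot-type enumeration, the number of such paths has a clean binomial expression: decomposing at the \emph{last} lattice point on the anti-diagonal $a+b = 2h-z-1$ that lies weakly below the diagonal, one obtains $N(h,z)$ as a sum of products of a ballot number (paths from $(0,0)$ staying above the diagonal and ending at that point) times a Catalan-type count (paths from that point to $(n,n)$ staying above the diagonal). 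I would first record this exact formula.

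Next I would establish the symmetry \eqref{eq:R-sym}. Reflecting a path $\ga$ across the main diagonal $(0,0)$--$(n,n)$ — i.e.\ swapping up-steps and right-steps and reversing — is a bijection of $\CCat(n)$ with itself, and it sends the event ``$\ga$ passes SE of $(h-z-\tfrac12, h-\tfrac12)$'' to ``$\ga$ passes SE of $(n-h+\tfrac12, n-h+z+\tfrac12)$'', which after the coordinate bookkeeping is exactly the event defining $R_n(n+z-h, z)$. (The hypothesis $h \le \frac{n+z+1}{2}$ is just the statement that $p$ lies on the ``lower'' side of the center of symmetry, so that both $h$ and $n+z-h$ lie in $I$ and the identity is the non-trivial folding of a unimodal function.) This part is a short bijective argument.

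The substance is the two-sided derivative bound \eqref{eq:R h-derivative}. The difference $R_n(h,z) - R_n(h+1,z)$ equals $\frac{1}{\Cat(n)}$ times the number of paths that pass SE of $p = (h-z-\tfrac12, h-\tfrac12)$ but \emph{not} SE of $p' = (h-z+\tfrac12, h+\tfrac12)$ — i.e.\ paths separating the two consecutive anti-diagonal lines, which amounts to a precise local count: paths through a specific lattice point (or short set of points) on the anti-diagonal $a+b = 2h-z-1$ with prescribed behavior just before and after. I would express this count as (number of paths $(0,0)\to q$ staying above the diagonal) $\times$ (number of paths $q \to (n,n)$ staying above the diagonal), where $q = (h-z, h-1)$ or a nearby point. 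Each factor is a ballot number, which I would estimate by the local central limit theorem for the simple random walk conditioned to stay positive (a Bessel-type bridge / reflection-principle estimate): a ballot path from the origin to a point at ``height'' $y$ above the diagonal after $m$ steps has count of order $\binom{m}{m/2}\cdot \frac{y}{m}$ for $y = O(\sqrt m)$, and the relevant height here is proportional to $|n-2h+z|$ (the signed distance from $p$ to the center of the staircase, up to constants) while $m = \Theta(n)$. Multiplying the two ballot estimates and dividing by $\Cat(n) \sim 4^n n^{-3/2}$, the $4^n$'s cancel and the powers of $n$ combine to give $\Theta\!\left(\frac{n-2h+z}{n^2}\right)$, with explicit universal constants $C_3, C_4$ from the upper and lower bounds in the local estimates — valid because $h \in I$ forces $m$, and all the relevant sub-walk lengths, to be $\Theta(n)$.

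The main obstacle will be making the local-limit estimates for the ballot numbers uniform and two-sided over the whole range $h \in [\frac{n}{10}, \frac{n+z+1}{2}]$, $z \in J$: near the top of this range, $n-2h+z$ can be as small as $O(1)$, so one needs the count of separating paths to be proportional to this quantity down to the boundary, which requires the sharp form of the ballot/reflection identity rather than a Gaussian approximation, together with uniform control of the ratio of the two Catalan-type factors to $\Cat(n)$. I would handle this by using the exact reflection-principle identity for ballot numbers, $\text{(paths to }(a,b)\text{ staying strictly above)} = \binom{a+b}{b} - \binom{a+b}{b-1}$, which already has the factor $\frac{a-b}{a+b}\cdot$(central term) built in exactly, and then applying standard ratio asymptotics for binomial coefficients with arguments in a window of size $O(\sqrt n)$ about the mean — entirely elementary once set up, and deferred (as the excerpt promises) to Section~\ref{s:R-properties}.
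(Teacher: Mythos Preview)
Your symmetry argument for~\eqref{eq:R-sym} is fine and is essentially the paper's proof (poset/path central symmetry). The gap is in your treatment of~\eqref{eq:R h-derivative}. You write that $R_n(h,z)-R_n(h+1,z)$ equals $1/\Cat(n)$ times ``the number of paths that pass SE of $p$ but not SE of $p'$'', and then propose to express this as a single product of two ballot numbers at a point $q=(h-z,h-1)$. Both steps are wrong. The events ``SE of $p$'' and ``SE of $p'$'' are \emph{not} nested: a path can reach height $h$ with $x$-coordinate $<h-z$, then take several right-steps at height~$h$, and reach height $h+1$ with $x$-coordinate $\ge h-z+1$; such a path is SE of $p'$ but not of $p$. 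Consequently $R_n(h,z)-R_n(h+1,z)$ is a \emph{signed} difference of two counts, namely the paper's $M_2-M_1$, where $M_2$ counts paths containing the vertical run $(h-z,h-1)\to(h-z,h+1)$ and $M_1$ counts paths containing the horizontal run $(h-z-1,h)\to(h-z+1,h)$. Each of $M_1,M_2$ individually is a product of ballot numbers whose ``height above the diagonal'' is $z$, \emph{not} $n-2h+z$; your sentence ``the relevant height here is proportional to $|n-2h+z|$'' is simply false. Estimating $M_2/\Cat(n)$ alone gives $r_n(h-z,h)=\Theta(z^2/n^{3/2})=\Theta(n^{-1/2})$ for $z\in J$, which is off from the target $\Theta((n-2h+z)/n^2)$ by a full factor of $n$ near $h=n/2$.

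The crucial factor $(n-2h+z)$ appears only after an \emph{exact} algebraic cancellation between $M_1$ and $M_2$: both are products of nearby binomials, and their difference factors as
\[
M_2-M_1 \;=\; \binom{x+h-2}{x-1}\binom{2n-x-h}{n-h-1}\,\frac{(z)(z+1)(z+2)\,(n-2h+z)}{h(h+1)(n-h)(n-x+2)}\,,\qquad x=h+1-z,
\]
from which one reads off $R_n(h,z)-R_n(h+1,z)=r_n(h-z,h)\cdot\frac{(z+1)(n-2h+z)}{(h+1)(n-h)}$ and then applies the two-sided bounds on $r_n$ to finish. Any local-limit or Gaussian approximation applied to $M_1$ and $M_2$ separately will not detect this cancellation with the required precision. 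So your plan needs one extra, essential step: identify the second family of ``separating'' paths ($M_1$), write both $M_1$ and $M_2$ via the exact ballot formula, and subtract \emph{before} estimating. Once you do that, the rest of your outline (uniform ratio asymptotics for the binomials, which is exactly Lemma~\ref{l:visit probability Dyck path}) goes through.
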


In other words, the function \ts $R_n(\cdot,z)$ \ts is
symmetric, bimodal, and attains its minimum value at \ts
$h = \lfloor \frac{n+z}{2}\rfloor$. See Figure~\ref{f:R-function}
and~\ref{fig:2-graphs} for an illustration.

\begin{figure}[hbt]
\vskip-.3cm
\hskip-2.4cm
\includegraphics[width=13.2cm]{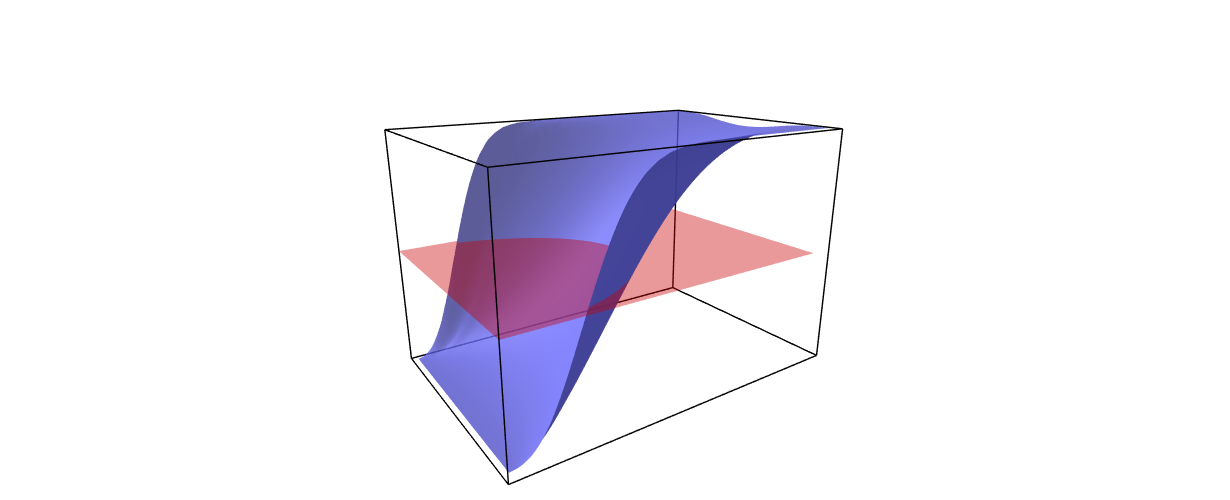}
\hskip-2.1cm
\includegraphics[width=5.6cm]{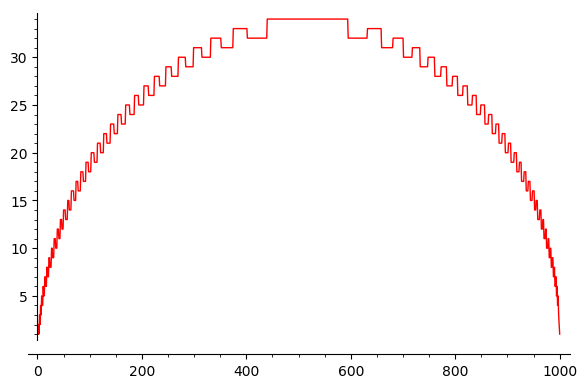}
\vskip-0.2cm
\caption{\small{\underline{Left}: Graph of the function $S(r,t)$ defined in~$\S$\ref{ss:finrem-Brown},
which coincides with the limit surface of the sorting probability~$R_n(h,z)$ when $n\to \infty$.
We also draw the red plane at height \ts $\frac12$ \ts to indicate positions of the best
sorting pairs \ts $x=(1,h)$ \ts and \ts $y=(1,h-z)$. }
\underline{Right}: The intersection between the plane and the surface on the left picture when $n=1000$. The plot is the function $z=f(h)$ that minimizes \ts $\bigl|R_n(h,z)-\frac{1}{2}\bigr|$ \ts for \ts $h \in [0,n]$.}
\label{f:R-function}
\end{figure}

\smallskip

\nin
In fact, the symmetry follows from the central
symmetry of the Catalan poset $P_n$:
$$
\Pb\ts \bigl[ \. L(2,b)  \ts < \ts L(1,a) \ts \bigr] \, = \,
\Pb\ts \bigl[ \. L(2,n-a)  \ts < \ts L(1,n-b) \ts \bigr],
$$
which proves~\eqref{eq:R-sym}.

\bigskip

\section{Proof of Theorem~\ref{t:main}}\label{s:theorem}

By Lemma~\ref{l:R is z-increasing function}, 
there exists  \ts $z \in J$, so that
\begin{equation}\label{eq:not tight}
\frac{1}{2}  \. - \.  \frac{C_2}{\sqrt{n}} \ \leq  \  R_n\bigl(n/2, z\bigr) \ \leq \ \frac{1}{2}\..
\end{equation}

Let \ts $h_0:= n/2 - K\ts n^{3/4}$, where the constant \ts $K>0$ \ts will be determined later.
We have:
\begin{align*}
R_n(h_0,z) \ & = \ R_n\bigl(n/2, z\bigr) \. + \. \sum_{k=0}^{\lfloor K\ts n^{3/4}\rfloor} \.  R_n(h_0+k,z) \. - \. R_n(h_0+k+1,z) \\
& \geq_{\eqref{eq:R h-derivative}} \ R_n\bigl(n/2, z\bigr)  \, + \,
C_3 \sum_{k=0}^{\lfloor K\ts n^{3/4}\rfloor} \. \frac{|k-z|}{n^2} \\
& \geq \quad \ R_n\bigl(n/2, z\bigr)   \, + \,  C_3 \frac{\bigl(K\. n^{3/4}\bigr)^2}{4\.n^2}   \\
& \geq_{\eqref{eq:not tight}} \ \frac{1}{2} \, - \, \frac{C_2}{\sqrt{n}} \, + \, \frac{C_3\ts K^2}{4\sqrt{n}}\..
\end{align*}
Taking \ts $K:=2\sqrt{\frac{C_2}{C_3}}$\ts,
we get
 \begin{equation}\label{eq:h_0 bound}
 R_n(h_0,z) \ \geq \ \frac{1}{2}.
 \end{equation}
It then follows from~\eqref{eq:not tight} and  \eqref{eq:h_0 bound}, that
\[
R_n \bigl(n/2,z\bigr) \, \leq \, \frac{1}{2} \,  \leq \, R_n \bigl(h_0,z\bigr).
\]
Hence, there exists an integer \ts $h_1 \in \left[h_0, n/2\right]$,
such that \ts $R_n(h_1+1,z) \leq \frac12 \leq R_n(h_1,z)$.  We conclude:
 \begin{align*}
\frac{1}{2} \, - \, R_n(h_1+1,z) \  \leq \  R_n(h_1,z) \. - \. R_n(h_1+1,z)  \
\leq_{\eqref{eq:R h-derivative}} \  C_4 \. \frac{2 Kn^{3/4} \ts + \ts 10\sqrt{n}}{n^2}
\ = \ O\bigl(n^{-5/4}\bigr).
 \end{align*}
This completes the proof of the theorem. \ $\sq$

\smallskip

\begin{ex}{\rm The construction in the proof is quite delicate, as it is
fundamentally discrete rather than continuous.
In Figure~\ref{f:proof-ex}, we show the graph of
\ts $R_n(h,z)$ \ts with \ts $n=1000$ and \ts
two values: \ts $z=33$ \ts and \ts $z=34$.
In the former case, the function intersects~$\ts\frac12$,
and \ts $h_1=439$ \ts as in the proof.  In the latter case,
the function is always above~$\ts\frac12$.
}\end{ex}

\begin{figure}[hbt]
	\centering
	\includegraphics[height=4.cm]{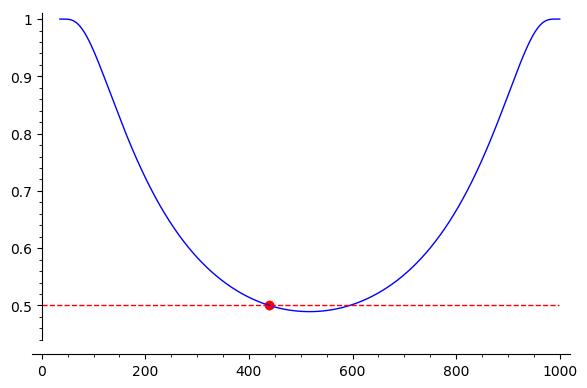}\hskip2.4cm
	\includegraphics[height=4.cm]{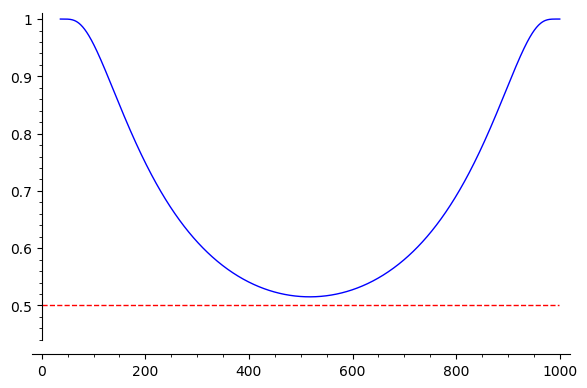}
\caption{\small{Functions \ts $R_{1000}(h,33)$ \ts and \ts $R_{1000}(h,34)$.}}
	\label{f:proof-ex}
\end{figure}

\bigskip

\section{Proof of lemmas}\label{s:R-properties}

\subsection{Preliminaries}
For \ts $0 \leq a \leq b$, denote by \ts $f(a,b)$ \ts
the number of paths \ts $\ga: (0,0) \to (a,b)$ \ts above diagonal $y=x$.

\smallskip
\begin{lemma}[{\rm The ballot theorem, see e.g.\ \cite[$\S$III.1]{Fel}}]\label{l:HLF}
	For $0 \leq a \leq b$,
	\[f(a,b) \ = \  \binom{a+b}{a} - \binom{a+b}{a-1} \ = \ \binom{a+b}{a} \frac{ b-a+1}{b+1}.\]	
\end{lemma}
\smallskip

Let \ts $p_n(a,b)$ \ts be the probability that the lattice path \ts
$\ga \in \CCat(n)$ \ts passes through the point $(a,b)$.  By definition,
$$
p_n(a,b) \, = \, \frac{f(a,b) \cdot f(n-b,n-a)}{\Cat(n)}\..
$$

\smallskip

\begin{lemma}[{\cite[Thm~3.3]{MP14}}]\label{l:visit probability Dyck path}
Fix \ts $\ep, K >0$.  We have:
\begin{equation}\label{eq:visit vertex 1}
p_n(h-z,h) \, \leq \,  \frac{ C_1(\ep) \cdot (z+1)^2}{n^{3/2}} \, e^{-z^2/n}\ts,
\end{equation}
for \. $\ep \ts n \ts \leq \ts h \ts \leq \ts (1-\ep) \ts n$, and \ts $0 \leq z < h$,
and \ts $C_1(\ep)$ \ts a constant independent of~$n$.
Furthermore,
\begin{equation}\label{eq:visit vertex 2}
C_2(\ep,K) \. \frac{(z+1)^2}{n^{3/2}} \, \le \,   p_n(h-z,h)  \, \le \,  C_3(\ep,K) \. \frac{(z+1)^2}{n^{3/2}} \quad \text{for} \quad  1 \. \leq z \. \leq \. K \ts \sqrt{n}\., 		
\end{equation}
where \ts $h,\ts z$ \ts as above, and \ts $C_2(\ep,K)$, $C_3(\ep,K)>0$ \ts are constants independent of~$n$.
\end{lemma}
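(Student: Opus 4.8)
The plan is to express $p_n(h-z,h)$ via the exact formula $p_n(a,b) = f(a,b)\ts f(n-b,n-a)/\Cat(n)$ from the preliminaries, plug in the closed form of $f$ given by the ballot theorem (Lemma~\ref{l:HLF}), and then perform a careful asymptotic analysis of the resulting product of binomial coefficients. Writing $a = h-z$ and $b = h$, we have
$$
f(a,b) \. = \. \binom{2h-z}{h-z}\.\frac{z+1}{h+1}, \qquad
f(n-b,n-a) \. = \. \binom{2n-2h+z}{n-h}\.\frac{z+1}{n-h+1}\ts,
$$
so that $p_n(h-z,h)$ is a product of two central-ish binomial coefficients times $\Cat(n)^{-1} \sim \sqrt{\pi}\ts n^{3/2} 4^{-n}$, multiplied by the ballot factors $(z+1)^2 / \bigl((h+1)(n-h+1)\bigr)$. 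The factor $(h+1)(n-h+1)$ is $\Theta(n^2)$ on the range $\ep n \le h \le (1-\ep)n$, which already produces the $(z+1)^2/n^{3/2}$ scaling once the binomial asymptotics are supplied.

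First I would handle the binomial coefficients using Stirling's formula with explicit error bounds; the key point is that $\binom{2m+z}{m}$ for $m$ of order $n$ and $z$ of order $\sqrt n$ behaves like $\frac{4^m}{\sqrt{\pi m}}\ts e^{-z^2/(4m)}\bigl(1+O(z/m) + O(z^3/m^2)\bigr)$, which follows from $\binom{2m+z}{m}/\binom{2m}{m} = \prod_{j=1}^{z}\frac{2m+j}{m+j}$ and taking logarithms (the sum telescopes to $\log$-terms that Taylor-expand to $-z^2/(4m) + O(z^3/m^2)$, plus a $2^z$-type factor that combines with the change $4^m \to 4^{(2m+z)/2}$). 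Combining the two binomials with $m_1 = h - z$ (approximately, up to the shift) and $m_2 = n-h$, the powers of $4$ assemble to $4^n$ and cancel against $\Cat(n)$, the $\sqrt{\pi m_i}$ denominators give a $\Theta(n)$ contribution matching one power of $n$, and the two Gaussian factors multiply to $e^{-z^2/(4(h-z)) - z^2/(4(n-h))}$. For the upper bound~\eqref{eq:visit vertex 1} valid for all $0 \le z < h$, I would keep only the first Gaussian factor $e^{-z^2/(4(h-z))}$ and crudely bound the second by $1$; since $h - z \le h \le (1-\ep)n < n$, we get $e^{-z^2/(4(h-z))} \le e^{-z^2/(4n)}$, and absorbing the constant $4$ into $C_1(\ep)$ (or keeping $e^{-z^2/n}$ with a worse constant — note the statement writes $e^{-z^2/n}$, so I would just verify $z^2/(4(h-z)) \ge z^2/(4n) \ge \ldots$ suffices, using $h - z \ge \ep n - z$ and a case split on whether $z \le \ep n/2$). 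For the matching lower bound in~\eqref{eq:visit vertex 2} on the range $z \le K\sqrt n$, the error terms $O(z/m_i)$ and $O(z^3/m_i^2)$ are $O(K/\sqrt n) \to 0$ and the Gaussian factors are bounded below by $e^{-K^2/(4\ep)}$ and above by $1$, so everything is pinned between two positive constants depending only on $\ep$ and $K$, giving~\eqref{eq:visit vertex 2}.

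The main obstacle is bookkeeping the error terms uniformly in $h$ across the whole window $\ep n \le h \le (1-\ep)n$ and in $z$ across $0 \le z < h$ (for the upper bound) versus $z \le K\sqrt n$ (for the two-sided bound): one must make sure the constant implied in the Stirling remainder does not degrade near the endpoints $h \approx \ep n$ or $h \approx (1-\ep)n$, and that the shift $h-z$ versus $h$ in the first binomial's "central" parameter does not matter — this is where the restriction $z < h$ and, for the sharper estimate, $z = O(\sqrt n)$, are used. Since this is essentially a local central limit theorem for the conditioned random walk and the result is quoted from \cite[Thm~3.3]{MP14}, in the final write-up I would either cite that theorem directly or present the Stirling computation in an appendix; here I only sketch the computation to indicate why the stated bounds hold. $\sq$
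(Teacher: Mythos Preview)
The paper does not give its own proof of this lemma: it is stated as a citation of \cite[Thm~3.3]{MP14} and used as a black box, with the remark that the constants are computed explicitly in~\cite{MP14}. So there is nothing in the paper to compare your argument against, and your own closing remark (``cite that theorem directly or present the Stirling computation in an appendix'') is exactly what the authors did.

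That said, your sketch is the standard and correct route --- substitute the ballot formula for $f$, apply Stirling to the two off-center binomials, and read off the $(z+1)^2/n^{3/2}$ scaling from the ballot factors together with $\Cat(n)^{-1}$. One small gap: to obtain the exponent $e^{-z^2/n}$ in~\eqref{eq:visit vertex 1} you cannot throw away the second Gaussian factor and ``absorb the $4$'' into the constant, since a factor inside the exponent is not a multiplicative constant. You need to keep both factors: their product is
\[
\exp\!\Bigl(-\tfrac{z^2}{4}\bigl(\tfrac{1}{h}+\tfrac{1}{n-h}\bigr)\Bigr)
\ = \ \exp\!\Bigl(-\tfrac{z^2\, n}{4\,h(n-h)}\Bigr)
\ \le \ e^{-z^2/n},
\]
using $h(n-h)\le n^2/4$. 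With this correction the upper bound~\eqref{eq:visit vertex 1} goes through; the two-sided bound~\eqref{eq:visit vertex 2} on $z\le K\sqrt n$ is then immediate since both Gaussian factors lie between $e^{-K^2/(4\ep(1-\ep))}$ and~$1$.
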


\smallskip

In fact, when $n\to \infty$, the constants in the theorem are computed explicitly in~\cite{MP14},
but only the upper and lower bounds are needed in the proof of Lemmas~\ref{l:R is z-increasing function}
and~\ref{l:R is h-unimodal}.

\smallskip

Let  \ts $q_n(a,b)$ \ts denote
the probability that the lattice path \ts $\ga \in \CCat(n)$ \ts passes through both \ts
$(a,b-1)$ and $(a,b)$.  Similarly, let \ts $r_n(a,b)$ \ts denote
the probability that the lattice path \ts $\ga \in \CCat(n)$ \ts passes through points \ts
$(a,b-1)$, \ts $(a,b)$ \ts and \ts $(a,b+1)$.  From Lemma~\ref{l:visit probability Dyck path},
we immediately have:
\begin{equation}\label{eq:visit edge 1}
r_n(h-z,h)\. \le \. q_n(h-z,h) \. \leq \. p_n(h-z,h)  \. \le \.
\frac{ C_1(\ep) \cdot (z+1)^2}{n^{3/2}} \, e^{-z^2/n}\ts\.
\end{equation}
This immediately gives the upper bound in
\begin{equation}\label{eq:visit edge 2}
 C_4(\ep,K) \. \frac{(z+1)^2}{n^{3/2}}  \. \le \. r_n(h-z,h) \. \le \.
 q_n(h-z,h) \. \leq \.  C_3(\ep,K) \. \frac{(z+1)^2}{n^{3/2}}
\quad \text{for}  \ \    1 \. \leq z \. \leq \. K \ts \sqrt{n}\., 		
\end{equation}
The lower bound in~\eqref{eq:visit edge 2} follows from
$$
r_n(a,b) = \frac{f(a,b-1)\cdot f(n-b-1,n-a)}{\Cat(n)} \, = \,
p_n(a,b)\. \frac{(n-b)(b-a)(b-a+2)(b+1)}{(2n-a-b)(b-a+1)^2(a+b)}\..
$$
Indeed, for $b=h$ and $a=h-z = b-o(b)$, one can take \ts
$C_4(\ep,K) = C_2(\ep,K)/5$ \ts for \ts $h>\ep n$ large enough.

\medskip

\subsection{Proof of Lemma~\ref{l:R is z-increasing function}}
%
By Proposition~\ref{p:Dyck_path}, the sorting probability function \ts $R_n(h,z)$ \ts
is the probability that the vertical step at height $h$ of a random lattice path \ts
$\ga\in \CCat(n)$ \ts happens at \ts $x \geq h-z$. This gives:
\begin{equation}\label{eq:sum q}
 R_n(h,z) \, = \, \sum_{k=1}^{z} \. q_n(h-k,h)\ts.
 \end{equation}
Since \ts $q_n(h-k,h) \geq 0$, it then follows that  \ts $R_n(h,\cdot)$ \ts
is an increasing function for every~$h$.

\smallskip

Now set \ts $\ep=\frac{1}{10}$, $K=10$, and let \ts $\ep\ts n \leq h \leq (1-\ep)\ts n$.  We have:
\begin{align*}
R_n \bigl(h,\sqrt{n}/10 \bigr) \ & = \
\sum_{k=1}^{\sqrt{n}/10} \. q_n(h-k,h)  \
\leq_{\eqref{eq:visit edge 2}}
\ \sum_{k=1}^{\sqrt{n}/10} \. C_3(\ep,K)\frac{(k+1)^2}{n^{3/2}} \\  &\le \,
C_3(\ep,K) \. \frac{\bigl(\sqrt{n}/10\bigr)^3}{n^{3/2}}
\ = \ \frac{C_3(\ep,K)}{1000}\..
\end{align*}
A direct computer calculation shows that
$$
\frac{C_3\bigl(\frac1{10},10\bigr)}{1000} \.< \. \frac{1}{4}\..
$$
This proves the first inequality in~\eqref{eq:R-14}.

\smallskip

On the other hand, we have:
\begin{align*}
   R_n \left(h,{K\sqrt{n}} \right) \, & = \ \sum_{k=1}^{{K\sqrt{n}}} \. q_n(h-k,h)  \ = \  1 \. - \. \sum_{k>{K\sqrt{n}}}q_n(h-k,h)   \\
   &   \geq_{\eqref{eq:visit edge 1}} \  1 \.  - \.    C_1(\ep) \. \sum_{k>{K\sqrt{n}}} \. \frac{(z+1)^2}{n^{3/2}} \. e^{-z^2/n}  \\
   &  \gtrsim  \quad \ 1 \. - \. C_1(\ep) \. \int_{K}^ \infty  \.  x^2 \ts e^{-x^2} \. d x\ts.
\end{align*}
A direct computer calculation shows that
 for \ts $\ep=\frac{1}{10}$ \ts and \ts $K=10$, we have:
$$
C_1\bigl(0.1\bigr) \, \int_{10}^\infty  \.  x^2 \ts e^{-x^2} \, dx \, < \, \frac{1}{4}\..
$$
This proves the second inequality in~\eqref{eq:R-14}.
\smallskip

For~\eqref{eq:R z derivative}, let \ts $h\in I$, \ts $z\in J$ \ts be as in the lemma.  We have:
\begin{align*}
R_n(h,z+1) \. - \. R_n(h,z) \ =_{\eqref{eq:sum q}} \ q_n(h-(z+1),h)
\end{align*}
and the bounds now follow from~\eqref{eq:visit vertex 2}.
This completes the proof of the Lemma~\ref{l:R is z-increasing function}. \ $\sq$

\medskip

\subsection{Bimodality}
The following lemma is used in the proof of Lemma~\ref{l:R is h-unimodal}
in the next section.

\smallskip

\begin{lemma}\label{l:unimodality}
	Let \ts $h \in [1,n-1]$\ts and \ts $z\in [1,h-1]$\ts.
Then  \ts $R_n(h,z) > R_n(h+1,z)$ \ts if and only if \ts $h \leq \frac12(n+z)\ts$.
\end{lemma}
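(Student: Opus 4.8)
The plan is to express $R_n(h,z) - R_n(h+1,z)$ as a telescoping-type difference of lattice path probabilities using the formula $R_n(h,z) = \sum_{k=1}^{z} q_n(h-k,h)$ from~\eqref{eq:sum q}, and then reduce the sign question to a single explicit ratio. First I would write
\[
R_n(h,z) - R_n(h+1,z) \ = \ \sum_{k=1}^{z} \bigl[\ts q_n(h-k,h) - q_n(h+1-k,h+1)\ts\bigr].
\]
Each $q_n(a,b)$ is, by the definition $q_n(a,b) = f(a,b-1)\cdot f(n-b-1,n-a)/\Cat(n)$, a product of two ballot numbers (Lemma~\ref{l:HLF}), so each bracketed term is a difference of products of binomial coefficients divided by $\Cat(n)$. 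The goal is to show the whole sum is positive exactly when $h \leq \frac12(n+z)$.

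The key simplification I expect is that many of these terms reorganize into a single net contribution. Reindexing the second sum ($j = k-1$, running over shifted vertical steps at height $h+1$) and comparing with the first, most of the "interior" edges $(h-k,h)$ versus $(h-k,h+1)$ should be governed by a common monotone factor, and the sum should collapse — via the explicit product formula for $r_n$ and $q_n$ already derived in the Preliminaries — to something proportional to $p_n(\text{endpoint}) \cdot (n - 2h + z + O(1))$, whose sign is controlled by $n - 2h + z$. Concretely, I would aim to show that $R_n(h,z) - R_n(h+1,z)$ has the same sign as $n + z - 2h$ (up to lower-order boundary corrections that don't flip the sign), since this is precisely what the quantitative Lemma~\ref{l:R is h-unimodal} asserts with its bounds $C_3 \frac{n-2h+z}{n^2} \le R_n(h,z) - R_n(h+1,z) \le C_4 \frac{n-2h+z}{n^2}$; here we only need the bare sign, which should follow from an exact algebraic identity rather than asymptotics. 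The cleanest route is likely to find a closed form for $R_n(h,z) - R_n(h+1,z)$ directly: the probability that $\ga$ has its vertical step at height $h$ in a column $\geq h-z$ but at height $h+1$ in a column $\leq h-z$ (and vice versa), which by the lattice-path picture is a difference of two "corner" probabilities, each expressible through $f$ and hence through binomials, and whose comparison is a one-line binomial inequality pivoting on whether $h \leq \frac{n+z}{2}$.

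The main obstacle will be bookkeeping the boundary terms $k=1$ and $k=z$ in the telescoping, and making sure the "lower-order corrections" genuinely never reverse the sign of $n+z-2h$ when that quantity is small (i.e.\ near the claimed threshold $h = \frac12(n+z)$). At the threshold itself one must check the difference is exactly zero or handle the parity of $n+z$ carefully, since the statement is a strict inequality $R_n(h,z) > R_n(h+1,z)$ iff $h \leq \frac12(n+z)$ — so when $n+z$ is odd there is no integer $h$ with $2h = n+z$ and the dichotomy is clean, but when $n+z$ is even the case $h = \frac{n+z}{2}$ needs $R_n(h,z) = R_n(h+1,z)$, i.e.\ the difference vanishes identically, which should drop out of the exact binomial identity (the factor $n - 2h + z$ becoming $0$). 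I would therefore prioritize extracting the exact rational expression for the difference, confirm its numerator factors as (positive stuff)$\times(n+z-2h)$, and only then read off the sign.
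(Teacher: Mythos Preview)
Your ``cleanest route'' is exactly what the paper does, and it bypasses the telescoping sum entirely. Rather than summing $q_n(h-k,h)-q_n(h+1-k,h+1)$ over~$k$ and hoping for a collapse, the paper observes directly that $R_n(h,z)-R_n(h+1,z)=(M_2-M_1)/\Cat(n)$, where, setting $x:=h+1-z$, $M_1$ counts paths containing the two-step horizontal segment $(x-2,h)\to(x,h)$ and $M_2$ counts paths containing the two-step vertical segment $(x-1,h-1)\to(x-1,h+1)$ --- precisely your two ``corner'' probabilities. Each $M_i$ is a \emph{single} product of two ballot numbers $f(\cdot,\cdot)f(\cdot,\cdot)$, and subtracting them produces an explicit rational expression with the factor $(n-x-h+1)=(n-2h+z)$ sitting in the numerator; the sign is then immediate, with no boundary bookkeeping needed. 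So the detour through the sum over~$k$ is unnecessary once you see why the difference of the two ``SE of a point'' events localizes to exactly those two corner segments. Your concern about the threshold is also well placed: the factor $(n-2h+z)$ vanishes when $2h=n+z$, so the argument actually yields $R_n(h,z)\ge R_n(h+1,z)$ with equality at the midpoint, matching what the paper's own computation shows.
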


\begin{proof}
Let \ts $A=(h-z+1/2, h+1/2)$ \ts and \ts $B=(h-z-1/2, h-1/2)$ \ts be two points in the plane.
By Proposition~\ref{p:Dyck_path}, the sorting probabilities \ts $R_n(h+1,z)$ \ts and \ts
$R_n(h,z)$ are  probabilities that the lattice path \ts $\ga\in \CCat(n)$ \ts
passes to SE of the points~$A$ and~$B$, respectively. Denote by $N_1$ and $N_2$,
respectively, the numbers of these paths.  Then we have:
$$
R_n(h,z) \. - \. R_n(h+1,z) \, = \, \frac{1}{\Cat(n)} \. \bigl(N_2 \. - \. N_1\bigr).
$$

Let \ts $x := h+1- z$.  Denote by $M_1$ and $M_2$ the number of paths \ts $\ga\in \CCat(n)$ \ts which
contain segments \ts $(x-2,h) \to (x,h)$ and  \ts $(x-1,h-1)\to (x-1,h+1)$ \ts,
respectively.  Note that \ts $N_2  - N_1$ \ts is exactly the difference between the
numbers of paths passing below point $A$ but above $B$, and the paths passing
left of~$A$ but right of~$B$.  Thus, $N_2  - N_1 = M_2  - M_1$.
\begin{figure}[hbt]
\includegraphics[height=5.2cm]{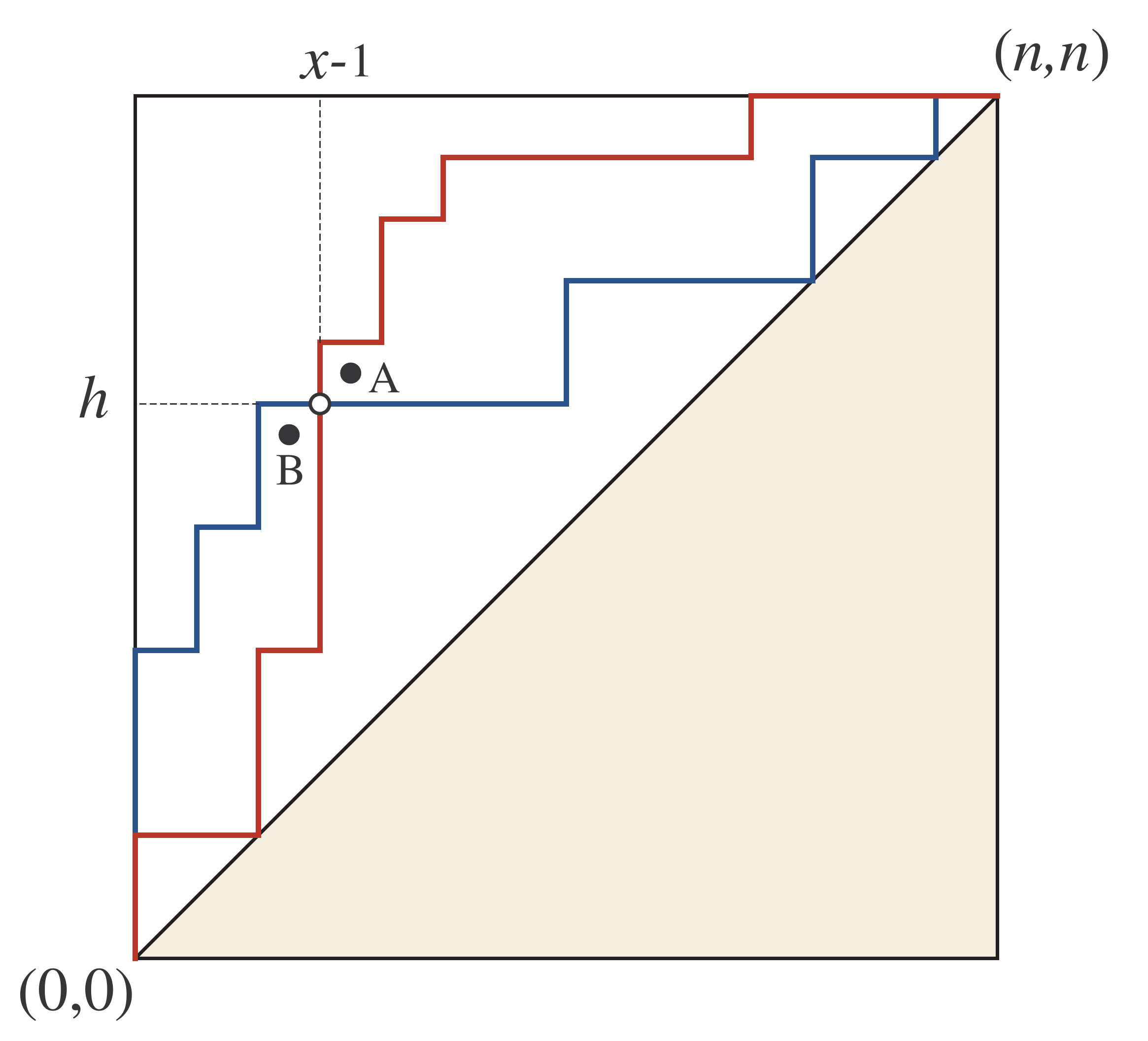}
\caption{The two types of paths in the proof of Lemma~\ref{l:unimodality},
with $M_1$ counting  blue paths and $M_2$  red paths.}
\end{figure}

We have:
{\small
$$\aligned
M_1\, & = \, f(x-2,h) \. f(n-h,n-x) \, = \, \binom{ x+h-2}{x-2} \binom{2n-x-h}{n-h} \frac{ (h-x+3) (h-x+1) }{(h+1)(n-x+1)}\., \\
M_2\, &= \, f(x-1,h-1)\. f(n-h-1, n-x+1)\, = \, \binom{ x+h-2}{x-1} \binom{2n-x-h}{n-h-1} \frac{(h-x+3)(h-x+1)}{h(n-x+2)}
\endaligned
$$
}
and therefore:
{\small
\begin{align*}
M_2 \. - \. M_1 \, = \, \binom{ x+h-2}{x-1} \binom{2n-x-h}{n-h-1} \frac{ (h-x+1)(h-x+2)(h-x+3)(n-x-h+1)}{h (h+1)(n-h)(n-x+2)}.
\end{align*}
}
The last expression is \ts $\geq 0$ \ts if and only if \ts $h+x \leq n+1$, and the result follows.
\end{proof}

\medskip

\subsection{Proof of Lemma~\ref{l:R is h-unimodal} } 	
Equation~\eqref{eq:R-sym} is proved earlier.  For~\eqref{eq:R h-derivative},
from the proof of Lemma~\ref{l:unimodality} we have:
$$
R_n(h,z) \. - \. R_n(h+1,z)  \, = \,  \frac{M_2}{\Cat(n)} \.  \frac{(z+1)(n-2h+z)}{(h+1)(n-h)}
\, = \,  r_n(h-z,z) \.  \frac{(z+1)(n-2h+z)}{(h+1)(n-h)}\..
$$
%
Since \ts $z\in J$, we have:
$$
r_n(h-z,z) \, =_{\eqref{eq:visit vertex 2}} \,  \Theta \left(\frac{1}{\sqrt{n}}\right).
$$
On the other hand, since \ts $h\in I$ \ts  and \ts $z\in J$, we have:
\[
\frac{(z+1)(n-2h+z)}{(h+1)(n-h)}\, = \, \Theta \left(\frac{\sqrt{n}}{n^2}  \. \bigl(n-2h+z\bigr)\right).
\]
Combining these two asymptotics, we conclude:
\[   R_n(h,z)\. - \. R_n(h+1,z)  \, = \, \Theta\left(\frac{n-2h+z}{n^2}\right).
\]
This proves~\eqref{eq:R h-derivative} and completes the proof of
Lemma~\ref{l:R is h-unimodal}. \ $\sq$

\bigskip

\section{Final remarks and open problems}\label{s:finrem}

\subsection{}\label{ss:finrem-Brown}
The sorting probability function \ts $R_n(h,z)$ is the discrete version of the
continuous function
\[
S(t,r)\, :=  \,  \Pb \ts \bigl [ B_0^+(t) \geq r  \bigr]\ts,
\]
where $B_0^+$ is the \emph{Brownian excursion} on $[0,1]$,
defined as the standard Brownian motion conditioned on the event \ts
$B_0^+(0)=B_0^+(1)=0$ and $B_0^+(t) > 0$, for all $t \in (0,1)$.
It  has the following explicit density formula (see e.g., \cite{IM74,Pit}):
\[
S(t,r) \ = \  \frac{2}{\sqrt{2\pi\ts t^3 \ts (1-t^3)}} \ts
\int_{0}^{r} x^2  \ts \exp\left( \frac{-x^2}{2t\ts (1-t)}\right) \, dx\ts,
\]
see Figure~\ref{f:R-function}.
It is shown by Kaigh \cite{Kai76} that \ts $R_n(h,z)$ \ts converges to \ts
$S\left(\frac{h}{n},\frac{z}{\sqrt{2n}} \right)$ \ts as $n \to \infty$.
Unfortunately, the error terms of this convergence are too weak to imply
Theorem~\ref{t:main}.  See Figure~\ref{fig:2-graphs} for a plot comparing
functions \ts $S$ and~$R_{200}$ side by side, and note that these graphs
appear nearly identical on this scale.

\begin{figure}[hbt]
	\centering
	\includegraphics[width=0.32\linewidth]{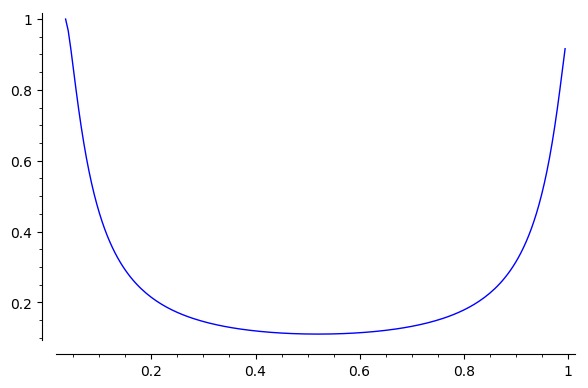} \hskip1.1cm
	\includegraphics[width=0.32\linewidth]{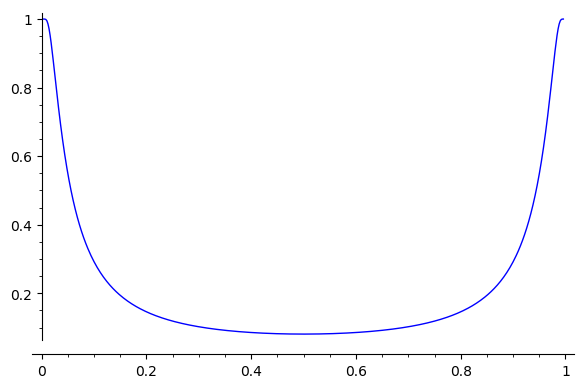}
	\caption{\small{\underline{Left}: \. The plot of  \ts $R_n(tn,\sqrt{n}/2 )$, where \ts $t \in (0,1)$ \ts and $n=200$. \underline{Right}: \. The plot of the probability \ts $1-S(t,r)$, where \ts $t \in (0,1)$ \ts and \ts $r=\sqrt{2}/4$.}}
	\label{fig:2-graphs}
\end{figure}

\subsection{} Lemma~\ref{l:visit probability Dyck path} proved in~\cite{MP14}
is one of  many results in the context of the limit shape of \emph{pattern
avoiding permutations}, see e.g.~\cite{Kit} for an extensive overview of
pattern avoidance.
Many strongly related results are obtained in this direction, too many to
list.  Let us single out papers~\cite{AM,MP16} which are independent of~\cite{MP14},
but cover the same pattern avoidance problem which translates into asymptotics of
Dyck paths. Let us also mention two followup papers~\cite{HRS1,HRS2} which
rederives and extends results in~\cite{MP16,MP14} via Brownian excursions.

\subsection{} In answering the second author's question~\cite{Pak},
Richard Stanley found the following curious limit formulas:
\begin{equation}\label{eq:stan}
\lim_{n\to \infty} \. \ee\bigl[L(1,k-1)\bigr] \, = \, 2k \. - \. \frac{k\binom{2k}{k}}{4^{k-1}} \,, \quad
\lim_{n\to \infty} \. \ee\bigl[L(2,k)\bigr] \, = \, 2k \. + \. \frac{k\binom{2k}{k}}{4^{k-1}} \,,
\end{equation}
where the expectation is over random \ts $L \in \cL(P_n)$.  The limits
for probabilities \ts $\bP\bigl[L(1,a)<(2,b)\bigr]$ \ts for fixed $a>b\ge 1$ also exist,
but much less elegant.  Stanley asked whether there are elegant expectation formulas
similar to~\eqref{eq:stan}, for other partitions~$\la=n\al$.

In principle, using the
technology in~\cite{KS,Saks}, one can use~\eqref{eq:stan} to show that \ts
$\de(P_n)<\frac1e+\ep$ \ts for all $\ep>0$ and $n$ large enough.   \ts
Note that in~\cite{CPP} we already showed that \ts
$\de(P_\la)=O(1/\sqrt{n})$ \ts for the general \emph{TVK case} \ts $\la= n\al$.

\subsection{} It would be interesting to see how tight  Theorem~\ref{t:main} is.
Let
\begin{equation}\label{eq:al-be-def}
\al\, := \, \liminf_{n\to \infty} \. \frac{\log \de(P_n)}{\log n} \quad \text{and}
\quad \be\, := \, \limsup_{n\to \infty} \. \frac{\log \de(P_n)}{\log n}\,.
\end{equation}
We conjecture that
\begin{equation}\label{eq:al-be}
-\infty \, <  \, \al \, < \, \be  \, =  \, -\frac{5}4\,.
\end{equation}
In other words, we believe that our upper
bound is asymptotically tight. On the other hand, we believe that the lower
bound is substantially smaller, but still polynomial.  This has to do with the
fact that $\liminf$ depends on number theoretic properties of $n$ governing
the position of \ts $\frac12$ \ts in the interval \ts $\bigl[R(h+1,z), \ts R(h,z)\bigr]$.
At the moment, we cannot even prove that \ts $\de(P_n)>0$ \ts for all \ts $n\ge 3$.
Finally, most speculatively, we conjecture that
\begin{equation}\label{eq:limsup}
\de(P_n) \. = \. o\bigl(n^{-5/4}\bigr)\ts.
\end{equation}
We refer to~\cite[$\S$12-13]{CPP}
for further discussions and conjectures on the sorting probability.

\subsection{} Our computer calculations show that the sorting probability $\de(P_n)$
has an erratic behavior, but seem to fit well Theorem~\ref{t:main} and the conjectures above.
The first graph in Figure~\ref{f:SP-graph-sage} shows that \ts
$\de(n) \ts n^{5/4}$ \ts is always less than~$3$, but frequently greater than~$1$, and
greater than \ts $\frac13$ \ts at least half the time.  While this may seem to
point against~\eqref{eq:limsup}, we believe it holds since a simple regression
does indicate a very slow trend downwards.

Similarly, the second graph in Figure~\ref{f:SP-graph-sage} shows that
\ts $\log_n \de(n)$ \ts is frequently smaller than \ts $-\frac54$,
but is never too small, suggesting that \ts $-3<\al< \frac32$\.,
in the notation of~\eqref{eq:al-be-def}. Perhaps, going far beyond
\ts $n=1000$ \ts would give further evidence in
support or against the conjectures above.  See the full sequences
\ts $\de(P_n)\ts\Cat(n)$ \ts and \ts $\frac12(1-\de(P_n))\ts\Cat(n)$ \ts
at \cite[\href{https://oeis.org/A335212}{A335212}]{OEIS}
 and \ts \cite[\href{https://oeis.org/A335212}{A335213}]{OEIS},
respectively. 

\begin{figure}[hbt]
\includegraphics[width=7.2cm]{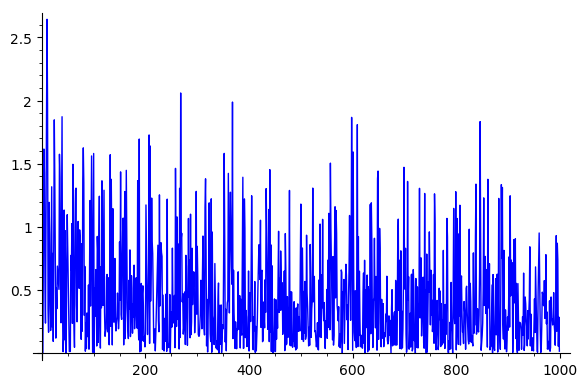} \qquad
\includegraphics[width=7.2cm]{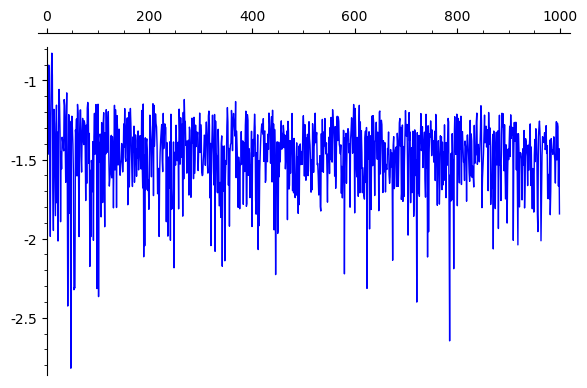}
\vskip-.2cm
\caption{Graphs of \. $\de(P_n)\. n^{5/4}$ \. and \. $\log_n \de(P_n)$,
\ts for \. $3\le n \le 1000$.}
\label{f:SP-graph-sage}
\end{figure}

\subsection{} By the proof of Lemma~\ref{l:unimodality}, the integer \ts
$N_2-N_1\ge 0$ \ts for \ts $h \leq \frac12(n+z)$.  This is a fundamentally
combinatorial statement about the difference in the number of certain lattice paths,
somewhat similar in nature to the \emph{super Catalan numbers}, see e.g.~\cite[$\S$4.5]{P1}
for the references.  It would be interesting to find an explicit combinatorial
interpretation for \ts $(N_2-N_1)$.

\vskip.6cm
	
\subsection*{Acknowledgements}
We are grateful to Sam Hopkins, Han Lyu and Richard Stanley for
interesting discussions and useful comments, and to {\sf MathOverflow}
for providing a convenient platform for such discussions.  The last
two authors were partially supported by the NSF.

\vskip.9cm

\vskip.7cm

\end{document}